\newtheorem{thm}{Theorem}[section]
\newtheorem{lem}[thm]{Lemma}
\theoremstyle{definition}
\newcommand{\mc}{\mathcal}
\newcommand{\mb}{\mathbb}
\newcommand{\N}{\mb{N}}
\newcommand{\lla}{\left\langle}
\newcommand{\e}{\varepsilon}
\newcommand{\rra}{\right\rangle}
\newcommand{\lb}{\label}
\newcommand{\ef}{\eqref}
\newenvironment{acknowledgements} {\begin{abstract}} {\end{abstract}}
\newcommand{\proofpart}[2]{%
  \par
  \addvspace{\medskipamount}%
  \noindent\emph{Part #1: #2}\par\nobreak
  \addvspace{\smallskipamount}%
  \@afterheading
}
\title{On Correlations of Liouville-like functions}
\author{Yichen You}
\address{Department of Mathematical Sciences, Durham University, Stockton Road, Durham, DH1 3LE, UK}
\email{yichen.you@outlook.com}
\begin{document}
\begin{abstract}
Let $\mc{A}$ be a set of mutually coprime positive integers, satisfying
\begin{align*}
    \sum\limits_{a\in\mc{A}}\frac{1}{a} = \infty.
\end{align*}
Define the (possibly non-multiplicative) "Liouville-like" functions 
\begin{align*}
   \lambda_{\mc{A}}(n) = (-1)^{\#\{a:a|n, a \in \mc{A}\}} \text{ or } (-1)^{\#\{a:a^\nu\parallel n, a \in \mc{A}, \nu \in \N\}}. 
\end{align*}
We show that 
\begin{align*}
    \lim\limits_{x\to\infty}\frac{1}{x}\sum\limits_{n \leq x} \lambda_\mc{A}(n) = 0
\end{align*}
holds, answering a question of de la Rue. 

We also show that if $\mc{A}\cap \mb{P}$ has relative density $0$ in $\mb{P}$, the $k$-point correlations of $\lambda_{\mc{A}}$ satisfies
\begin{align*}
    \lim_{x \to \infty}\frac{1}{x}\sum\limits_{n\leq x}\lambda_\mc{A}(a_1n+h_1)\cdots\lambda_\mc{A}(a_kn+h_k) = 0,
\end{align*}
where $k \geq 2, a_ih_j \neq a_jh_i$ for all $1 \leq i < j \leq k$,
extending a recent result of O. Klurman, A. P. Mangerel, and J. Teräväinen.
\end{abstract}
\maketitle

\section{Introduction}

Let $\mc{K} \subset \N - \{1\}$ be a set of mutually coprime positive integers. We define $\omega_\mc{K}(n)$ and $\Omega_\mc{K}(n)$ by
\begin{align*}
    \omega_\mc{K}(n) = \sum\limits_{\substack{k | n, \\ k \in \mc{K}}} 1, \;\;\; \Omega_\mc{K}(n) = \sum\limits_{\substack{k^\nu || n, \\ k \in \mc{K}}} \nu.
\end{align*}
In the following, we will use $\mc{A}$ to denote a subset of $\N - \{1\}$ satisfying 
\begin{align} \lb{con1}
    (a, b) = 1 \text{ for all distinct } a, b \in \mc{A}, \text{ and } \sum\limits_{a\in \mc{A}} \frac{1}{a} = \infty.
\end{align}
Let 
\begin{align}\lb{deflambda}
    \lambda_{\mc{A}}(n)= (-1)^{\omega_\mc{A}(n)}\text{ or }(-1)^{\Omega_\mc{A}(n)}
\end{align}
be a "Liouville-like" function. As $\mc{A}$ is a sufficiently dense sequence because of \ef{con1}, we might expect that $\omega_\mc{A}(n)$ and $\Omega_\mc{A}(n)$ grow quickly for typical $n$, which means that $\lambda_{\mc{A}}(n)$ should change sign quite frequently. Motivated by a problem posed by de la Rue at the AIM conference on Sarnak’s conjecture (Problem 7.2 in the list \cite{delarue}), we investigate for which $\mc{A}$ the correlation averages of $\lambda_{\mc{A}}(n)$ satisfy the \textit{Chowla property}, i.e., 
\begin{align}\lb{chowla}
    \lim_{x \to \infty}\frac{1}{x}\sum\limits_{n\leq x}\lambda_\mc{A}(n+h_1)\cdots\lambda_\mc{A}(n+h_k) = 0
\end{align}
for any $k \geq 1$ and $h_1, ..., h_k\in \N$ fixed and distinct. In the special case that $\mc{A} =\mb{P}$ the set of all primes, \ef{chowla} coincides with Chowla's conjecture when $\lambda_{\mc{A}}(n)= (-1)^{\Omega_\mc{A}(n)}$ is the Liouville function. 

Note that if $\mc{A}$ is not a subset of primes or prime powers then $\lambda_{\mc{A}}$ is not multiplicative\footnote{Let $f: \N \to \mathbb{C}$. $f$ is multiplicative if $f(ab) = f(a)f(b)$ whenever $a, b \in \N, (a, b) = 1$.}. For example, let $p_1, p_2$ be two distinct primes and $p_1p_2 \in \mc{A}$ (so that $p_1, p_2 \notin \mc{A}$). We have
\begin{align*}
    -1 = \lambda_{\mc{A}}(p_1p_2) \neq \lambda_{\mc{A}}(p_1)\lambda_{\mc{A}}(p_2) = 1.
\end{align*}
Recently, O. Klurman, A. P. Mangerel, and J. Teräväinen \cite{klurman2023elliott} have shown that \ef{chowla} holds if $\mc{A}$ is a set of primes which has relative density $0$ \footnote{The relative density of a subset $\mc{P} \subset \mb{P}$ within $\mb{P}$ is defined as $\lim\limits_{x \to \infty} |\mc{P}\cap[1, x]|/|\mb{P}\cap[1, x]|$, provided the limit exists.} in $\mb{P}$. It turns out that their result can be generalized to a larger class of sets $\mc{A}$ satisfying \ef{con1}. 
Our main result is
\begin{thm}\lb{thm1.2}
  Let $\mc{A} \subset \N$ satisfy \ef{con1} and $\lambda_\mc{A}(n) = (-1)^{\omega_\mc{A}(n)}$ or $(-1)^{\Omega_\mc{A}(n)}$.
  \begin{itemize}
      \item[(a)] We have
      \begin{align}\lb{eq1}
         \lim\limits_{x\to\infty}\frac{1}{x}\sum\limits_{n \leq x} \lambda_\mc{A}(n) = 0.
     \end{align}
     \item[(b)] Let $k \geq 2$ and $a_1, ..., a_k, h_1, .., h_k\in \N$ fixed with $a_ih_j-a_jh_i\neq 0$ whenever $i \neq j$. Then we have
     \begin{align*}
         \lim_{x \to \infty}\frac{1}{x}\sum\limits_{n\leq x}\lambda_\mc{A}(a_1n+h_1)\cdots\lambda_\mc{A}(a_kn+h_k) = 0,
     \end{align*}
     provided that $\mc{A} \cap \mb{P}$ has relative density $0$ in $\mb{P}$.
      
  \end{itemize}
\end{thm}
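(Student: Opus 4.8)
The plan is to prove both parts by the same mechanism: isolate the contribution of the elements $a\in\mc{A}$ of bounded size, show that by coprimality these force genuine cancellation of size $\prod_{a\le y}|m_a|$, which tends to $0$, and show that the remaining large elements decorrelate from this bounded part. Throughout I write $\lambda_{\mc{A}}=\prod_{a\in\mc{A}}g_a$, where $g_a(n)=(-1)^{\mathbf{1}[a\mid n]}$ (resp. $g_a(n)=(-1)^{v_a(n)}$, with $a^{v_a(n)}\parallel n$) is periodic with period $a$ (resp. a power of $a$) and has average $m_a=1-2/a$ (resp. $m_a=(a-1)/(a+1)$) over a full period. The hypothesis $\sum_{a\in\mc{A}}1/a=\infty$ gives $\prod_{a\in\mc{A}}|m_a|=0$, since $\log|m_a|\asymp -1/a$ for large $a$; this divergence is the only source of the limit $0$.

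For part (a), I fix $y$ and split $\mc{A}=\mc{A}_y\sqcup\mc{B}$ with $\mc{A}_y=\{a\in\mc{A}:a\le y\}$. Since the elements of $\mc{A}$ are pairwise coprime, $\lambda_{\mc{A}_y}$ is periodic of period $Q=\prod_{a\in\mc{A}_y}a$, and the Chinese Remainder Theorem factors its average over a period as $\frac1Q\sum_{r=1}^{Q}\lambda_{\mc{A}_y}(r)=\prod_{a\in\mc{A}_y}\big(\frac1a\sum_{s=1}^{a}g_a(s)\big)=\prod_{a\in\mc{A}_y}m_a=:P_y$. Writing $\lambda_{\mc{A}_y}=P_y+\psi$ with $\psi$ periodic of period $Q$ and mean zero, I obtain
\[
\frac1x\sum_{n\le x}\lambda_{\mc{A}}(n)=P_y\cdot\frac1x\sum_{n\le x}\lambda_{\mc{B}}(n)+\frac1x\sum_{n\le x}\lambda_{\mc{B}}(n)\psi(n).
\]
The first term is bounded by $|P_y|$. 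The crux is that the second term tends to $0$ as $x\to\infty$ for each fixed $y$: because every $b\in\mc{B}$ is coprime to $Q$, the value $\lambda_{\mc{B}}(n)$ depends only on the part of $n$ coprime to $Q$, so $\lambda_{\mc{B}}$ ought not to correlate with the mean-zero $Q$-periodic function $\psi$. Expanding $\psi$ into additive characters $e(cn/Q)$ with $c\not\equiv 0$, this amounts to the equidistribution estimate $\frac1x\sum_{n\le x}\lambda_{\mc{B}}(n)e(cn/Q)\to 0$, which I would establish from the coprimality of $\mc{B}$ with $Q$. Letting first $x\to\infty$, then $y\to\infty$ and using $P_y\to 0$, yields \ef{eq1}.

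For part (b), I run the same truncation on the correlation sum. The bounded part now contributes, over a period of $a$, the average $\mu_a=\frac1a\sum_{r=1}^{a}\prod_{i=1}^{k}g_a(a_ir+h_i)$. The nondegeneracy $a_ih_j-a_jh_i\neq 0$ guarantees that for all but finitely many $a$ (those coprime to $\prod_i a_i\prod_{i<j}(a_ih_j-a_jh_i)$) the residues $-h_i/a_i$ are pairwise distinct modulo every prime dividing $a$, so no two of the divisibility conditions $a\mid a_in+h_i$ can hold simultaneously; inclusion--exclusion then gives $\mu_a=1-2k/a+O(1/a^2)$. Consequently $\prod_{a\in\mc{A}_y}|\mu_a|\to 0$ as $y\to\infty$, again by $\sum 1/a=\infty$, and this already uses elements of $\mc{A}$ of both types. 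As in (a), one is left to show that the large elements decorrelate from the $Q$-periodic bounded part, i.e.
\[
\frac1x\sum_{n\le x}\prod_{i=1}^{k}\lambda_{\mc{B}}(a_in+h_i)\,\Psi(n)\longrightarrow 0
\]
for every $Q$-periodic $\Psi$. Here I would reduce $\lambda_{\mc{B}}$ to its multiplicative prime part $\lambda_{\mc{B}\cap\mb{P}}$, absorbing the composite factors through the disjoint-prime-support decomposition of the elements of $\mc{B}$, and then invoke the correlation theorem of Klurman--Mangerel--Ter\"av\"ainen for non-pretentious multiplicative functions. The hypothesis that $\mc{A}\cap\mb{P}$ has relative density $0$ in $\mb{P}$ is exactly what licenses this input: it keeps $\lambda_{\mc{A}}$ away from the genuine Liouville function, for which $k\ge 2$ is Chowla's conjecture and is open, and it places the relevant multiplicative functions in the non-pretentious regime needed to apply the theorem.

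The main obstacle in both parts is the decorrelation of the tail from short periodic patterns. Its difficulty is structural rather than cosmetic: because $\sum_{b\in\mc{B}}1/b=\infty$, one cannot truncate $\mc{B}$ and discard the remainder, since the tail is never negligible in absolute value; the cancellation it provides must be preserved rather than thrown away. In part (b) this forces the full strength of the multiplicative correlation machinery, together with a careful split of $\mc{A}$ into its prime and composite parts according to which one carries the divergence $\sum_a 1/a=\infty$ (if $\sum_{p\in\mc{A}\cap\mb{P}}1/p<\infty$, the cancellation must instead be extracted from the composite elements), and a control of the error terms that is uniform in the truncation level $y$. That is where I expect the real work to lie.
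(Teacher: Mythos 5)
There is a genuine gap: the step you yourself call the crux --- the decorrelation
\begin{align*}
\frac1x\sum_{n\le x}\lambda_{\mc{B}}(n)\,e(cn/Q)\longrightarrow 0\qquad(c\not\equiv 0\ (\mathrm{mod}\ Q))
\end{align*}
--- is asserted, not proved, and the justification offered ("$\lambda_{\mc{B}}$ depends only on the part of $n$ coprime to $Q$, so it ought not to correlate with a mean-zero $Q$-periodic function") is not a valid argument. A function of the $Q$-coprime part of $n$ can perfectly well correlate with additive characters mod $Q$; ruling this out for a concrete $\lambda_{\mc{B}}$ is a genuine equidistribution statement. Already for $\mc{A}=\mb{P}$ and $\mc{B}$ the primes exceeding $y$, your claim is essentially the prime number theorem in arithmetic progressions for a Liouville-type function, and for general $\mc{A}$ the function $\lambda_{\mc{B}}$ need not even be multiplicative (it is built from composite, pairwise coprime moduli), so Hal\'asz-type technology does not apply to it directly. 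The same unproven decorrelation is load-bearing in your part (b). In short, your size-based split $\mc{A}=\mc{A}_y\sqcup\mc{B}$ puts all of the difficulty into the tail $\mc{B}$ and then does not resolve it.

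The idea you are missing, and which the paper's proof turns on, is to split $\mc{A}$ by \emph{primality} rather than by size: $\mc{A}=\mc{C}\sqcup\mc{P}$ with $\mc{C}$ the composite elements and $\mc{P}=\mc{A}\cap\mb{P}$. Mutual coprimality forces the least prime factors $p_a$ of the $a\in\mc{C}$ to be distinct and $a\ge p_a^2$, whence $\sum_{a\in\mc{C}}1/a\le\sum_p 1/p^2<\infty$ and therefore $\sum_{p\in\mc{P}}1/p=\infty$ \emph{automatically}; moreover the multiplicative semigroup $\lla\mc{C}\rra$ generated by $\mc{C}$ satisfies $|\lla\mc{C}\rra_x|\le x^{1/2}$. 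This kills the contingency you raise at the end (the case $\sum_{p\in\mc{A}\cap\mb{P}}1/p<\infty$ cannot occur) and, more importantly, lets one write $\lambda_{\mc{A}}$ as a convolution over the sparse set $\lla\mc{C}\rra$ against the genuinely multiplicative function $\lambda_{\mc{P}}$, to which Hall--Tenenbaum's theorem (for part (a)) and the Klurman--Mangerel--Ter\"av\"ainen correlation theorem (for part (b)) apply directly; the divergence $\sum_{p\in\mc{P}}1/p=\infty$ is exactly what makes those bounds $o(1)$. Your computation of the local means $m_a$ and $\mu_a$ and the observation $\prod_a|m_a|\to 0$ are correct but end up unused in a complete argument, because the cancellation must be extracted from the multiplicative prime part via Hal\'asz-type inputs, not from the periodic small-modulus part.
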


\section{Proof of Theorem \ref{thm1.2}}

We decompose $\mc{A}$ into two disjoint parts as
\begin{align}\lb{decom}
    \mc{A} = \mc{C}\sqcup \mc{P}
\end{align}
where $\mc{C}$ consists of all composite numbers in $\mc{A}$ and $\mc{P} = \mc{A} \cap \mb{P}$. Since the elements in $\mc{A}$  are mutually coprime, in Lemma \ref{lem1.1} we will show that $\mc{C}$ is sparse and $\mc{P}$ is dense, which is crucial to deduce Theorem \ref{thm1.2}. 

\textbf{Notation:} Let $t > 0$ and $\mc{K} \subset \N$. We use $\mc{K}_t$ to denote $\mc{K} \cap [1, t]$ and $\lla\mc{C}\rra$ to denote the set
\begin{align*}
    \lla\mc{C}\rra = \{n:  n =  \prod\limits_{c \in \mc{C}}c^\nu &\text{ with } \nu \in \N\cup\{0\} \}.
    \end{align*}
We will keep using these notations in the rest of the paper. 
\begin{lem}\lb{lem1.1}
    Let $\mc{A} \subset \N-\{1\}$ satisfy \ef{con1}. Then we can decompose $\mc{A} = \mc{C}\sqcup \mc{P}$, such that
    \begin{align}\lb{sparse}
        \sum\limits_{a\in \mc{C}} \frac{1}{a} < \infty\text{ and }\sum\limits_{a\in \mc{P}} \frac{1}{a} = \infty.
    \end{align}
    Moreover, we have
    \begin{align}\lb{index}
        |\lla\mc{C}\rra_x| \leq x^{1/2}.
    \end{align}    
\end{lem}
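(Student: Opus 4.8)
My plan is to extract everything from the mutual coprimality by passing through the least prime factor of each composite. Write $p(c)$ for the smallest prime dividing $c$. The decomposition itself is not in question—$\mc{C}$ and $\mc{P}$ are defined as the composite and prime elements of $\mc{A}$—so the real work is in the two sums in \ef{sparse} and the counting bound \ef{index}.

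First I would prove $\sum_{c \in \mc{C}} 1/c < \infty$. Every composite satisfies $c \geq p(c)^2$, equivalently $p(c) \leq \sqrt c$. Since distinct elements of $\mc{C}$ are coprime they share no prime divisor, so $c \mapsto p(c)$ is injective into $\mb{P}$. Consequently
\[
\sum_{c \in \mc{C}} \frac{1}{c} \;\leq\; \sum_{c \in \mc{C}} \frac{1}{p(c)^2} \;\leq\; \sum_{p \in \mb{P}} \frac{1}{p^2} \;<\; \infty.
\]
The divergence $\sum_{p \in \mc{P}} 1/p = \infty$ is then immediate: subtracting this finite quantity from the divergent sum $\sum_{a \in \mc{A}} 1/a$ (the hypothesis in \ef{con1}) leaves a divergent remainder.

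For the bound \ef{index} the idea is to build an explicit injection that shrinks each element to at most its square root. Because the $p(c)$ are pairwise distinct primes, the map
\[
\phi\Bigl(\prod_{c \in \mc{C}} c^{\nu_c}\Bigr) \;=\; \prod_{c \in \mc{C}} p(c)^{\nu_c}
\]
is well defined on $\lla\mc{C}\rra$ (coprimality makes the representation $\prod_c c^{\nu_c}$ unique) and injective (the exponents can be recovered from the factorization of $\phi(n)$ at the distinct primes $p(c)$). Using $p(c) \leq \sqrt c$ factor by factor gives $\phi(n) \leq n^{1/2}$, so $\phi$ restricts to an injection of $\lla\mc{C}\rra_x$ into $\{1, \dots, \lfloor\sqrt x\rfloor\}$, whence $|\lla\mc{C}\rra_x| \leq \lfloor\sqrt x\rfloor \leq x^{1/2}$.

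The one genuinely substantive step is this injection $\phi$: coprimality is used twice, once to guarantee injectivity (distinctness of the $p(c)$) and once, through $c \geq p(c)^2$, to guarantee the size contraction $\phi(n) \leq \sqrt n$. Everything else reduces to the elementary estimate $p(c) \leq \sqrt c$ together with the convergence of $\sum_p p^{-2}$, so I expect no serious obstacle beyond setting up $\phi$ correctly and checking it is well defined.
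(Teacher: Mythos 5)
Your proposal is correct and follows essentially the same route as the paper: least prime factors $p(c)$, the bound $c \geq p(c)^2$ with distinctness of the $p(c)$ from coprimality for \eqref{sparse}, and an injection built from $c \mapsto p(c)$ for \eqref{index} (the paper's map sends $\prod c_j^{\nu_j}$ to the square $\bigl(\prod p_{c_j}^{\nu_j}\bigr)^2$ and counts squares up to $x$, which is your $\phi$ composed with squaring). Your explicit remark that unique factorization of elements of $\lla\mc{C}\rra$ is needed for $\phi$ to be well defined is a small point the paper leaves implicit.
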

\begin{proof}
Let $\mc{A} \subset \N-\{1\}$ satisfy \ef{con1}. Then
\begin{align*}
    \sum\limits_{a\in \mc{A}} \frac{1}{a} =  \sum\limits_{a\in \mc{P}} \frac{1}{a} + \sum\limits_{a\in \mc{C}} \frac{1}{a} = \infty.
\end{align*}
Let $p_a$ be the smallest prime $p_a | a$ for $a \in \mc{A}$ and $\mc{P}_\mc{C} = \{p_a: a \in \mc{C}\}$. Observe that all primes $p_a\in\mc{P}_\mc{C}$ are distinct since all elements in $\mc{C}$ are coprime. Let $a \in \mc{C}$ be composite. Then we have $a = np_a \geq p_a^2$ for some integer $n|a$. Hence,
\begin{align*}
   \sum\limits_{a\in \mc{C}} \frac{1}{a} \leq  \sum\limits_{p_a\in \mc{P}_\mc{C}} \frac{1}{p_a^2} < \sum\limits_{p\in \mb{P}} \frac{1}{p^2}<\infty,
\end{align*}
and
\begin{align*}
   \sum\limits_{a\in \mc{P}} \frac{1}{a} = \sum\limits_{a\in \mc{A}} \frac{1}{a} - \sum\limits_{a\in \mc{C}} \frac{1}{a} = \infty.
\end{align*}
Let $\mb{P}^2 = \{p^2: p \in \mb{P}\}$. Then we have $\lla\mb{P}^2\rra = \{n^2: n \in \N\}$ and
\begin{align*}
    |\lla\mb{P}^2\rra_x| \leq x^{1/2}.
\end{align*}
Let $J \in \N$ and $m = \prod\limits_{j = 1}^Jc_j^{\nu_j}\in \lla\mc{C}\rra_x$ with $c_j \in \mc{C}, \nu_j \in \N$ for all $j \leq J$. We consider a map $\iota: \lla\mc{C}\rra_x \to \lla\mb{P}^2\rra_x$ where
\begin{align}\lb{injective}
    \iota(m=\prod\limits_{j = 1}^Jc_j^{\nu_j}) = \prod\limits_{j = 1}^Jp_{c_j}^{2\nu_j} = (\prod\limits_{j = 1}^Jp_{c_j}^{\nu_j})^2.
\end{align}
It is clear that $\iota$ is injective and hence
\begin{align*}
    |\lla\mc{C}\rra_x| \leq |\lla\mb{P}^2\rra_x| \leq x^{1/2}.
\end{align*}

\end{proof}

Before giving the proof of Theorem \ref{thm1.2}, we will briefly explain how we take advantage of \ef{sparse}. Let $n \in \N$. By the decomposition \ef{decom}, we can split $\lambda_\mc{A}(n)$ into two functions, $\lambda_\mc{C}$ and the multiplicative function $\lambda_\mc{P}$, for which,
\begin{align*}
    \lambda_\mc{A}(n) = \lambda_\mc{A}(n_{\mc{C}})\lambda_\mc{A}(n/n_{\mc{C}}) = \lambda_\mc{C}(n_{\mc{C}})\lambda_\mc{P}(n/n_{\mc{C}})
\end{align*}
where $n_{\mc{C}} \in \lla\mc{C}\rra$ divides $n$ and $n/n_{\mc{C}}$ has no divisors in $\mc{C}$. Equivalently, $\lambda_\mc{A}(n)$ can be expressed as
\begin{align}\lb{newexpress}
    \lambda_\mc{A}(n) = \sum\limits_{\substack{n_{\mc{C}} | n, \\n_{\mc{C}} \in \lla\mc{C}\rra}}\lambda_\mc{C}(n_{\mc{C}})\tilde{\lambda}_\mc{P}(n/n_{\mc{C}})
\end{align}
where 
\begin{align}\lb{defp}
    \tilde{\lambda}_\mc{P}(n) = \lambda_\mc{P}(n) \prod\limits_{c \in \mc{C}}(1-\mathbbm{1}_{c | n}).
\end{align}
Roughly speaking, \ef{sparse} allows us to reduce the averages of $\lambda_\mc{A}$ (or correlations of $\lambda_\mc{A}$) to a sum over $n_{\mc{C}} \in \lla\mc{C}\rra_x$ of the averages of the multiplicative function $\lambda_\mc{P}$ (or correlations of $\lambda_\mc{P}$) and we can control the averages of $\lambda_\mc{P}$ using multiplicativity. 

\begin{proof}[Proof of Theorem 1.1 (a)]
By \ef{newexpress}, when summing over $n$, we have
\begin{align}\lb{w1}
    \frac{1}{x}\sum\limits_{n \leq x} \lambda_\mc{A}(n) 
    &= \frac{1}{x}\sum\limits_{n_{\mc{C}} \in \lla\mc{C}\rra_x}\lambda_\mc{C}(n_{\mc{C}}) \sum\limits_{m \leq x/n_{\mc{C}}}\tilde{\lambda}_\mc{P}(m)\nonumber\\
    &=\frac{1}{x}\sum\limits_{n_{\mc{C}} \in \lla\mc{C}\rra_{x^{2/3}}}\lambda_\mc{C}(n_{\mc{C}}) \sum\limits_{m \leq x/n_{\mc{C}}}\tilde{\lambda}_\mc{P}(m) + O(x^{-1/6}).
\end{align}
The second equality holds, since by \ef{index}, 
\begin{align*}
    \sum\limits_{\substack{x^{2/3}< n_{\mc{C}} \leq x,\\n_{\mc{C}} \in \lla\mc{C}\rra}}\lambda_\mc{C}(n_{\mc{C}}) \sum\limits_{m \leq x/n_{\mc{C}}}\tilde{\lambda}_\mc{P}(m) \leq \sum\limits_{\substack{n_{\mc{C}} \leq x,\\n_{\mc{C}} \in \lla\mc{C}\rra}}\sum\limits_{m \leq x^{1/3}}1 
    \leq |\lla\mc{C}\rra_x| \cdot x^{1/3}\leq x^{5/6}.
\end{align*}
Let $C = \prod\limits_{c \in \mc{C}, c \leq x}c$. By substituting \ef{defp} into \ef{w1}, we have
\begin{align}\lb{h1}
    \frac{1}{x}\sum\limits_{n \leq x} \lambda_\mc{A}(n) &=\frac{1}{x}\sum\limits_{n_{\mc{C}} \in \lla\mc{C}\rra_{x^{2/3}}}\lambda_\mc{C}(n_{\mc{C}}) \sum\limits_{m \leq x/n_{\mc{C}}}\lambda_\mc{P}(m) \prod\limits_{c \in \mc{C}}(1-\mathbbm{1}_{c | n}) + O(x^{-1/6})\nonumber \\
    &= \frac{1}{x}\sum\limits_{n_{\mc{C}} \in \lla\mc{C}\rra_{x^{2/3}}}\lambda_\mc{C}(n_{\mc{C}}) \sum\limits_{\substack{d | C,\\d \in \lla\mc{C}\rra_x}}(-1)^{\omega_{\mc{C}}(d)}\lambda_\mc{P}(d)\sum\limits_{m \leq x/n_{\mc{C}}d}\lambda_\mc{P}(m) + O(x^{-1/6})\nonumber \\
    &\ll \frac{1}{x}\sum\limits_{n_{\mc{C}} \in \lla\mc{C}\rra_{x^{2/3}}}\sum\limits_{\substack{d | C,\\d \in \lla\mc{C}\rra_x}}\Big|\sum\limits_{m \leq x/n_{\mc{C}}d}\lambda_\mc{P}(m)\Big| + x^{-1/6}
\end{align}
By \ef{sparse}, we have
\begin{align}\lb{sx}
    I(x) = \sum\limits_{n \in \lla\mc{C}\rra_x} \frac{1}{n} \ll \prod\limits_{c \in \mc{C}} \sum\limits_{i = 0}^\infty \frac{1}{c^i} \ll \prod\limits_{c \in \mc{C}}(1 + \frac{1}{c-1}) = O(1),
\end{align}
and hence
\begin{align*}
    \sum\limits_{\substack{d | C,\\d \in \lla\mc{C}\rra_x,\\d> x^{1/4}}} \frac{1}{d}\ll x^{-1/8}\sum\limits_{\substack{d | C,\\d \in \lla\mc{C}\rra_x,\\d> x^{1/8}}} \frac{1}{d} \ll x^{-1/8}I(x) \ll x^{-1/8}.
\end{align*}
The error of truncating the inner sum in \ef{h1} to $d \in \lla\mc{C}\rra_{x^{1/4}}$ is therefore
\begin{align*}
    \frac{1}{x}\sum\limits_{n_{\mc{C}} \in \lla\mc{C}\rra_{x^{2/3}}}\sum\limits_{\substack{d | C,\\d \in \lla\mc{C}\rra_x,\\d> x^{1/4}}}\Big|\sum\limits_{m \leq x/n_{\mc{C}}d}\lambda_\mc{P}(m)\Big| 
    &\leq \sum\limits_{n_{\mc{C}} \in \lla\mc{C}\rra_{x^{2/3}}}\frac{1}{n_{\mc{C}}}\sum\limits_{\substack{d | C,\\d \in \lla\mc{C}\rra_x\\ d > x^{1/4}}}\frac{1}{d} \ll x^{-1/8}.
\end{align*}
Therefore, 
\begin{align*}
     \frac{1}{x}\sum\limits_{n \leq x} \lambda_\mc{A}(n) \ll \frac{1}{x}\sum\limits_{n_{\mc{C}} \in \lla\mc{C}\rra_{x^{2/3}}}\sum\limits_{\substack{d | C,\\d \in \lla\mc{C}\rra_{x^{1/4}}}}\Big|\sum\limits_{m \leq x/n_{\mc{C}}d}\lambda_\mc{P}(m)\Big| + x^{-1/8}.
\end{align*}
As $n_{\mc{C}}d \leq x^{11/12}$ for all $n_{\mc{C}} \in \lla\mc{C}\rra_{x^{2/3}}, d \in \lla\mc{C}\rra_{x^{1/4}}$, by a theorem of Hall and Tenenbaum \cite[III.~4.3 on p.~345]{tenenbaum2015introduction},
\begin{align}\lb{hall}
   \sum\limits_{m \leq x/n_{\mc{C}}d}\lambda_\mc{P}(m) \ll \frac{x}{n_{\mc{C}}d} \exp \Bigg(-2K \sum\limits_{\substack{p \leq x/n_{\mc{C}}d, \\ p \in \mc{P}}} \frac{1}{p}\Bigg) \ll \frac{x}{n_{\mc{C}}d} \exp \Bigg(-2K\sum\limits_{\substack{p \leq x^{1/12}, \\ p \in \mc{P}}} \frac{1}{p}\Bigg),
\end{align}
where $K > 0$ is an absolute constant. By \ef{sparse}, \ef{sx} and \ef{hall}, we obtain
\begin{align*}
     \frac{1}{x}\sum\limits_{n \leq x} \lambda_\mc{A}(n) &\ll \exp \Bigg(-2K\sum\limits_{\substack{p \leq x^{1/12}, \\ p \in \mc{P}}} \frac{1}{p}\Bigg)\sum\limits_{n_{\mc{C}} \in \lla\mc{C}\rra_{x^{2/3}}}\frac{1}{n_{\mc{C}}}\sum\limits_{\substack{d | C,\\d \in \lla\mc{C}\rra_{x^{1/4}}}}\frac{1}{d} + x^{-1/8}\\
     &\ll \exp \Bigg(-2K\sum\limits_{\substack{p \leq x^{1/12}, \\ p \in \mc{P}}} \frac{1}{p}\Bigg) I(x)^2+ x^{-1/8}= o(1) \text{ as } x \to \infty.
\end{align*}

\end{proof}

\begin{proof}[Proof of Theorem 1.1 (b)]
Let
\begin{align}\lb{lazysum}
    S_k(x)= \frac{1}{x}\sum\limits_{n\leq x}\lambda_\mc{A}(a_1n+h_1)\cdots\lambda_\mc{A}(a_kn+h_k),
\end{align}
and $X = \max\limits_{1 \leq i \leq k} (a_ix+h_i)$. Substituting \ef{newexpress} into \ef{lazysum}, we have
\begin{align}\lb{beforesum}
    S_k(x)= \sum\limits_{n_{\mc{C}_1}, ..., n_{\mc{C}_k} \in \lla\mc{C}\rra_X}\prod_{i = 1}^k\lambda_\mc{C}(n_{\mc{C}_i})\cdot S(x;n_{\mc{C}_1}, ...,n_{\mc{C}_k}),
\end{align}
where
\begin{align*}
     S(x;n_{\mc{C}_1}, ...,n_{\mc{C}_k}) =\frac{1}{x} \sum\limits_{\substack{n \leq x \\
    n_{\mc{C}_i}|a_in + h_i, \forall i \leq k}}\prod_{i = 1}^k\tilde{\lambda}_\mc{P}\Bigg(\frac{a_in+h_i}{n_{\mc{C}_i}}\Bigg).
\end{align*}
Let $0<T < (\log x)^{1/4k}$ depend on $x$ and tend to $\infty$ slowly as $x \to \infty$ which will be determined later. We want to truncate the sum in \ef{beforesum} over $n_{\mc{C}_i} \in \lla\mc{C}\rra_X$ to $n_{\mc{C}_i} \in \lla\mc{C}\rra_T$ for all $1 \leq i \leq k$ with admissible error. In order to achieve this, we will truncate $n_{\mc{C}_i} \in \lla\mc{C}\rra_X$ to $n_{\mc{C}_i}\in \lla\mc{C}\rra_z$ for some $z < x$ for all $1 \leq i \leq k$ and then to $T$. Let $\mc{R}_1(x), \mc{R}_2(x)$ denote the error term of these two truncations, respectively. We will show that $\mc{R}_1(x), \mc{R}_2(x) = o(1)$ as $x \to \infty$. 

\subsection{Truncation from $n_{\mc{C}_i} \in \lla\mc{C}\rra_X$ to $n_{\mc{C}_i} \in \lla\mc{C}\rra_T, \forall i \leq k$}
Let $g_i$ denote the greatest common divisor of $a_i, h_i, n_{\mc{C}_i}$ for $1 \leq i \leq k$ and $\tilde{n}_{\mc{C}_i} = n_{\mc{C}_i}/g_i$. If $a_in + h_i \equiv 0$ (mod $n_{\mc{C}_i}$) for some $i \leq k$, then $n$ should satisfy $n \equiv r_i$ (mod $\tilde{n}_{\mc{C}_i}$) for some integer $0 \leq r_i < \tilde{n}_{\mc{C}_i}$. Let 
\begin{align*}
     N=N(n_{\mc{C}_1}, ..., n_{\mc{C}_k}) = [n_{\mc{C}_1}, ..., n_{\mc{C}_k}].
\end{align*} 
Without loss of generality, we may assume $n_{\mc{C}_1}>z$ and hence $N \geq n_{\mc{C}_1} > z $. Then we have
\begin{align*}
    \mc{R}_1(x) &= \sum\limits_{\substack{n_{\mc{C}_1}, ..., n_{\mc{C}_k} \in \lla\mc{C}\rra_X,\\ n_{\mc{C}_1} > z}}\prod_{i = 1}^k\lambda_\mc{C}(n_{\mc{C}_i})\cdot S(x;n_{\mc{C}_1}, ...,n_{\mc{C}_k}) \\
    &\ll \frac{1}{x}\sum\limits_{\substack{n_{\mc{C}_1}, ..., n_{\mc{C}_k} \in \lla\mc{C}\rra_X,\\ N > z}}\sum\limits_{n \leq x}\prod_{i = 1}^k\mathbbm{1}_{n \equiv r_i (\tilde{n}_{\mc{C}_i})} \ll \frac{1}{x}\sum\limits_{\substack{n_{\mc{C}_1}, ..., n_{\mc{C}_k} \in \lla\mc{C}\rra_X,\\ N > z}}\sum\limits_{n \leq x}\prod_{i = 1}^k\sum\limits_{j = 0}^{g_i - 1}\mathbbm{1}_{n \equiv r_i+j \tilde{n}_{\mc{C}_i}(n_{\mc{C}_i})}.
\end{align*}
Let $0 \leq \tilde{r}_i < n_{\mc{C}_i}$ be an integer for $i \leq k$. By \ef{index} and \cite[Lemma 2.5]{gorodetsky2023squarefrees}, \footnote{Lemma 2.5 can be proven assuming $|\lla B\rra_x| \leq x^{\alpha+o(1)}$ for some $\alpha \in (0, 1)$ rather than that $\lla B\rra$ has an index.} we have
\begin{align}\lb{1trunc}
    \sum\limits_{\substack{n_{\mc{C}_1}, ..., n_{\mc{C}_k} \in \lla\mc{C}\rra_X,\\ N > z}}\sum\limits_{n \leq x}\prod_{i = 1}^k\mathbbm{1}_{n \equiv \tilde{r}_i (n_{\mc{C}_i})} \ll X^{o(1)}(Xz^{-1/2+o(1)}+X^{\frac{2}{3}}) \ll_{a_i, h_i} x^{o(1)}(xz^{-1/2+o(1)}+x^{\frac{2}{3}}).
\end{align}
Note that the size of \ef{1trunc} is independent of the choice of $\tilde{r}_i$ for all $i \leq k$. Since $g_i \leq \min\{a_i, h_i\}$ for all $i \leq k$ are fixed, we have
\begin{align*}
    \mc{R}_1(x) \ll \prod_{i = 1}^kg_i \sum\limits_{\substack{n_{\mc{C}_1}, ..., n_{\mc{C}_k} \in \lla\mc{C}\rra_X,\\ N > z}}\sum\limits_{n \leq x}\prod_{i = 1}^k\mathbbm{1}_{n \equiv \tilde{r}_i (n_{\mc{C}_i})}
    \ll x^{o(1)}(xz^{-1/2+o(1)}+x^{\frac{2}{3}}).
\end{align*}
Let $z = x^{1/10k}$. We obtain
\begin{align*}
    \mc{R}_1(x) = \sum\limits_{\substack{n_{\mc{C}_1}, ..., n_{\mc{C}_k} \in \lla\mc{C}\rra_X,\\ n_{\mc{C}_1} > z}}\prod_{i = 1}^k\lambda_\mc{C}(n_{\mc{C}_i})\cdot S(x;n_{\mc{C}_1}, ...,n_{\mc{C}_k}) = o(1) \text{ as } x \to \infty.
\end{align*}
Next, we will truncate $n_{\mc{C}_i} \in \lla\mc{C}\rra_z$ to $\lla\mc{C}\rra_T$. Without loss of generality, we assume $T < n_{\mc{C}_1} \leq x^{1/10k}$ and hence $ T < n_{\mc{C}_1} \leq N \leq x^{1/10} $. If $n_{\mc{C}_i}|a_in + h_i$ for all $ i \leq k$, solutions $n$ are of the form $n \equiv q$ mod $N$ where $q \leq N \in \N$ only depending on $n_{\mc{C}_i}, a_i, h_i$ for $1 \leq i \leq k$. The number of possibilities for the class $q$ (mod $N$) is at most $\prod_{i = 1}^kg_i$ which is fixed. Let $Q$ be the class of $q$ (mod $N$) where $|S(x;n_{\mc{C}_1}, ...,n_{\mc{C}_k})|$ takes the maximum. So when $n_{\mc{C}_i}|a_in + h_i, \forall i \leq k$ is solvable and $N \leq x^{1/10}$, we have
\begin{align}\lb{defs}
    S(x;n_{\mc{C}_1}, ...,n_{\mc{C}_k}) &\ll \frac{1}{x}\sum\limits_{m\leq x/N}\prod\limits_{i = 1}^k\tilde{\lambda}_\mc{P}\Bigg(\frac{a_i (Q+mN) + h_i}{n_{\mc{C}_i}}\Bigg) + \frac{1}{x}\nonumber\\
    &\ll \frac{1}{x} \sum\limits_{m\leq x/N}\prod\limits_{i = 1}^k\tilde{\lambda}_\mc{P}(A_im + H_i)+ \frac{1}{x} \text{ where } A_i = \frac{a_iN}{n_{\mc{C}_i}}, H_i=\frac{a_iQ+h_i}{n_{\mc{C}_i}}.
\end{align}
By bounding $S(x;n_{\mc{C}_1}, ...,n_{\mc{C}_k}) \ll \frac{1}{[n_{\mc{C}_1}, ...,n_{\mc{C}_k}]}$ trivially,
we have
\begin{align}\lb{bully}
     \mc{R}_2(x) = \sum\limits_{\substack{n_{\mc{C}_1}, ..., n_{\mc{C}_k} \in \lla\mc{C}\rra_z,\\ n_{\mc{C}_1} > T}}\prod_{i = 1}^k\lambda_\mc{C}(n_{\mc{C}_i})\cdot S(x;n_{\mc{C}_1}, ...,n_{\mc{C}_k}) \ll \sum\limits_{\substack{n_{\mc{C}_1}, ..., n_{\mc{C}_k} \in \lla\mc{C}\rra_z,\\n_{\mc{C}_1} > T}} \frac{1}{[n_{\mc{C}_1}, ...,n_{\mc{C}_k}]}.
\end{align}
Let $B_i \in \lla\mc{C}\rra_z $ be the greatest integer such that $B_i | n_{\mc{C}_1}$ and $B_i | n_{\mc{C}_i}$, and $e_i = n_{\mc{C}_i}/B_i $  for $2 \leq i \leq k$. Then we have
\begin{align}\lb{change}
    N = [n_{\mc{C}_1}, ...,n_{\mc{C}_k}] = [n_{\mc{C}_1}, B_2e_2,.., B_ke_k] = n_{\mc{C}_1}[e_2, ..., e_k],
\end{align}
Moreover, since $n_{\mc{C}_i} | a_in + h_i$ for all $i \leq k$, we have $B_i | a_1n+h_1$ and $B_i | a_in+h_i$, and hence
\begin{align}\lb{sized}
    B_i | a_1h_i - a_ih_1 \text{ for all $ 2 \leq i \leq k$.}
\end{align}
Let $\mc{D}_i$ be the set of all possibilities of $B_i$ for $2 \leq i \leq k$ and $\mc{D} = \bigcup\limits_{i = 2}^k \mc{D}_i$. By \ef{sized}, we have $|\mc{D}| = O(1)$ as $a_i, h_i$ are fixed for all $ i \leq k$. Applying \ef{change} to \ef{bully}, we have
\begin{align*}
    \mc{R}_2(x) \ll \sum\limits_{\substack{n_{\mc{C}_1}, ..., n_{\mc{C}_k} \in \lla\mc{C}\rra_z,\\ n_{\mc{C}_1}>T}} \frac{1}{[n_{\mc{C}_1}, ...,n_{\mc{C}_k}]} 
    &\ll |\mc{D}|^k\sum\limits_{\substack{n_{\mc{C}_1} \in \lla\mc{C}\rra_z, \\ n_{\mc{C}_1}>T}}\frac{1}{n_{\mc{C}_1}}\sum\limits_{e_2, ..., e_k\in \lla\mc{C}\rra_z}\frac{1}{[e_2, ..., e_k]}.
\end{align*}
Since by \ef{sparse} for any integer $l \geq 1$
\begin{align} \lb{boundsum}
    \sum\limits_{n_{1}, ..., n_{l} \in \lla\mc{C}\rra_x}\frac{1}{[n_{1},..., n_{l}]} &\leq \sum\limits_{n \in \lla\mc{C}\rra_{x^l}}\frac{\Big(\sum\limits_{m|n, m \in \lla\mc{C}\rra}1\Big)^l}{n} \leq \prod\limits_{c \in \mc{C}, c \leq x}\Bigg(1 + \sum\limits_{\nu = 1}^\infty \frac{(\nu+1)^l}{c^\nu}\Bigg)\nonumber \\
    &\ll \prod\limits_{c \in \mc{C}}\Bigg(1 + \frac{2^l} {c}\Bigg) \ll \prod\limits_{c \in \mc{C}}\Bigg(1 + \frac{1} {c}\Bigg)^{2^l} \ll 1
\end{align}
 and by \ef{sx}
\begin{align*}
    \sum\limits_{\substack{n_{\mc{C}_1} \in \lla\mc{C}\rra_z, \\ n_{\mc{C}_1}>T}}\frac{1}{n_{\mc{C}_1}} \ll T^{-1/2}I(x) \ll T^{-1/2},
\end{align*}
thus we obtain
\begin{align*}
    \mc{R}_2(x) \ll T^{-1/2} |\mc{D}|^k \sum\limits_{e_{2}, ..., e_{k} \in \lla\mc{C}\rra_x}\frac{1}{[e_{2},..., e_{k}]} \ll T^{-1/2} = o(1) \text{ as } x \to \infty.
\end{align*}
As a result, we can truncate the sum in \ef{beforesum} with admissible error, for which
\begin{align}\lb{aftersum}
    S_k(x) = \sum\limits_{\substack{n_{\mc{C}_1}, ..., n_{\mc{C}_k} \in \lla\mc{C}\rra_T}}\prod_{i = 1}^k\lambda_\mc{C}(n_{\mc{C}_i})\cdot S(x;n_{\mc{C}_1}, ...,n_{\mc{C}_k}) + o(1).
\end{align}
\subsection{Estimating the truncated sum} After truncation, our work reduces to estimating \ef{aftersum}. Substituting \ef{defp} into \ef{defs}, we have
\begin{align}\lb{again}
    |S(x;n_{\mc{C}_1}, ...,n_{\mc{C}_k})| &\ll \frac{1}{x}\Bigg|\sum\limits_{m\leq x/N}\prod\limits_{i = 1}^k\lambda_\mc{P}(A_im + H_i)\prod\limits_{c \in \mc{C}}(1-\mathbbm{1}_{c | A_im + H_i})\Bigg|+ \frac{1}{x}\nonumber \\
    &\ll \sum\limits_{d_1, ...,d_k \in \lla\mc{C}\rra_X} \Bigg|\sum\limits_{\substack{m\leq x/N,\\ d_i | A_im + H_i, \forall i\leq k}}\prod\limits_{i = 1}^k\lambda_\mc{P}(A_im + H_i)\Bigg|+ \frac{1}{x}.
\end{align}
Similarly, by the previous argument about truncation from $n_{\mc{C}_i} \in \lla\mc{C}\rra_X$ to $\lla\mc{C}\rra_T$ where $T \leq (\log x)^{1/4k}$, we can also truncate the sum in \ef{again} to $d_i \in \lla\mc{C}\rra_T $ for all $i \leq k$ with admissible error. Let 
\begin{align*}
     D= D(d_1, ..., d_k) = [d_1, ..., d_k] \leq T^k \leq (\log x)^{1/4}.
\end{align*}
If $d_i|A_im + H_i$ for all $ i \leq k$, solutions $m$ are of the form $m \equiv M$ mod $D$ where $0 \leq M < D \in \N$ depending on $n_{\mc{C}_i}, a_i, h_i, d_i$ only. Let $M(T)$ denote the number of possibilities for the class $M$  (mod $D$), which depends on $T$. Let $M'$ be the class $M$ (mod $D$) where 
\begin{align*}
    \Bigg|\sum\limits_{m\leq x/ND}\prod\limits_{i = 1}^k\lambda_\mc{P}\Bigg(\frac{A_i (M+mD) + H_i}{d_i}\Bigg)\Bigg|
\end{align*}
takes the maximum. When $d_i|A_im + H_i, \forall i \leq k$ is solvable, we have
\begin{align}\lb{bean}
    |S(x;n_{\mc{C}_1}, ...,n_{\mc{C}_k})|&\ll\frac{M(T)}{x}\sum\limits_{d_1, ...,d_k \in \lla\mc{C}\rra_T} \Bigg|\sum\limits_{m\leq x/ND}\prod\limits_{i = 1}^k\lambda_\mc{P}(A'_{i}m + H'_{i})\Bigg|+ \frac{T^kM(T)}{x},
\end{align}
where
\begin{align*}
    A'_{i} = \frac{A_iD}{d_i}=\frac{a_iND}{n_{\mc{C}_i}d_i}, H'_{i} = \frac{A_iM'+ H_i}{d_i} = \frac{a_iNM'+a_iQ+h_i}{n_{\mc{C}_i}d_i}.
\end{align*}
Let $X' = x/ND$. We have $x^{1/2} < X' \leq x $ since $ND \leq T^{2k} \leq (\log x)^{1/2}$ are small. Let $\e \in (1/\log \log x, 1/2)$ and $\e' \in [\e, 2\e]$ such that $(X')^{\e'} = x^\e$. Since
\begin{align*}
   \mb{D} (\lambda_\mc{P}, 1; x^\e, x) = \sum\limits_{\substack{x^\e<p\leq x,\\p\in \mc{P}}}\frac{2}{p} \leq \e,\text{ and } \mb{D}(\lambda_\mc{P}, 1; x^\e)=\sum\limits_{\substack{p \leq x^\e,\\p \in \mc{P}}} \frac{2}{p}\geq 1/\e \text{ as } x \to \infty,
\end{align*}
by \cite[Proposition 4.3 and Theorem 4.1]{klurman2023elliott} we have
\begin{align}\lb{goodbound}
    \Bigg|\sum\limits_{m\leq X'}\prod\limits_{i = 1}^k\lambda_\mc{P}(A'_{i}m + H'_{i})\bigg| &\ll C(T)X'R \ll C(T) x/ND \Big(\log \frac{1}{\e}\Big)^{1/2}\e,
\end{align}
where
\begin{align*}
    R = \Big(\log \frac{1}{\e}\Big)^{1/2} \mb{D}(\lambda_\mc{P}, 1;x^\e, x)+&\exp(-\mb{D}(\lambda_\mc{P}, 1;x^\e)^2)+\exp(-\frac{1}{8k^2\e'})
\end{align*}
and the implicit constant $C = C(T)$ in \ef{goodbound} depends on $T$. Also, by \ef{boundsum}, we have
\begin{align}\lb{qdbound}
    \sum\limits_{d_1, ...,d_k \in \lla\mc{C}\rra_T}\frac{1}{[d_1, ...,  d_k]} \leq \sum\limits_{n_{1}, ..., n_{k} \in \lla\mc{C}\rra_x}\frac{1}{[n_{1},..., n_{k}]}= O(1).
\end{align}
Applying \ef{bean}, \ef{goodbound} and \ef{qdbound} to \ef{aftersum} and recalling the definitions of $N$ and $D$, we obtain
\begin{align*}
    S_k(x)
    &\ll \sum\limits_{n_{\mc{C}_1}, ..., n_{\mc{C}_k} \in \lla\mc{C}\rra_T} |S(x;n_{\mc{C}_1}, ...,n_{\mc{C}_k})| + o(1)\\
    &\ll \frac{M(T)}{x}\sum\limits_{n_{\mc{C}_1}, ..., n_{\mc{C}_k} \in \lla\mc{C}\rra_T}\sum\limits_{d_1, ...,d_k \in \lla\mc{C}\rra_T} \Bigg|\sum\limits_{m\leq x/ND}\prod\limits_{i = 1}^k\lambda_\mc{P}(A'_{i}m + H'_{i})\Bigg|+ \frac{T^kM(T)}{x}\\
    &\ll M(T)C(T)\Big(\log \frac{1}{\e}\Big)^{1/2}\e\sum\limits_{n_{\mc{C}_1}, ..., n_{\mc{C}_k} \in \lla\mc{C}\rra_T}\frac{1}{[n_{\mc{C}_1}, ...,  n_{\mc{C}_k}]}\sum\limits_{d_1, ...,d_k \in \lla\mc{C}\rra_T}\frac{1}{[d_1, ...,  d_k]}+ \frac{T^kM(T)}{x}\\
    &\ll M(T)C(T)\Big(\log \frac{1}{\e}\Big)^{1/2}\e+ \frac{T^kM(T)}{x}.
\end{align*}
Let $T = \eta(x) < (\log x)^{1/4k}$ be a function tending to $\infty$ slowly as $x \to \infty$. Let $\e = \mu(T)= \mu(\eta(x))> 1/ \log \log x $ be a function tending to $0$ slowly as $x \to \infty$ such that $M(T)C(T)\Big(\log \frac{1}{\e}\Big)^{1/2}\e$ tends to $0$ as $x \to \infty$. Then we obtain
\begin{align*}
    \lim_{x\to\infty}\Big|\frac{1}{x}\sum\limits_{n\leq x}\prod_{i = 1}^k\lambda_\mc{A}(a_in+h_i)\Big| = 0
\end{align*}
as desired.
\end{proof}
\begin{acknowledgements}
    I would like to thank my supervisor Sacha Mangerel for fruitful discussions and constructive comments.
\end{acknowledgements}

\bibliographystyle{plain}
\addcontentsline{toc}{chapter}{Bibliography}
\bibliography{lambda}

\end{document}